\numberwithin{equation}{section}
\newtheorem{theorem}{Theorem}[section]
\newtheorem{lemma}[theorem]{Lemma}
\newtheorem{corollary}[theorem]{Corollary}
\theoremstyle{definition}
\theoremstyle{definition}
\def \i {\sqrt{-1}}
\def \pbp {\partial\bar\partial}
\def\XXint#1#2#3{{\setbox0=\hbox{$#1{#2#3}{\int}$ }
\vcenter{\hbox{$#2#3$ }}\kern-.58\wd0}}
\begin{document}
\title{A note on the generalization of the Kodaira embedding theorem}
\author{Chao Li,  Xi Zhang and QiZhi Zhao}
\address{Chao Li\\School of Mathematical Sciences\\
University of Science and Technology of China\\
Hefei, 230026,P.R. China\\}\email{leecryst@mail.ustc.edu.cn}

\address{Xi Zhang\\School of Mathematical Sciences\\
University of Science and Technology of China\\
Hefei, 230026,P.R. China\\ } \email{mathzx@ustc.edu.cn}
\address{Qizhi Zhao\\School of Mathematical Sciences\\
University of Science and Technology of China\\
Hefei, 230026,P.R. China\\}\email{zqz97@mail.ustc.edu.cn}
\subjclass[]{53C07, 58E15}
\keywords{Kodaira embedding theorem,\ holomorphic fibration,\ ampleness }
\thanks{The authors were supported in part by NSF in China,  No.11625106, 11571332 and 11721101.}
\maketitle
\begin{abstract}
In this paper, by using analytical methods  we obtain a generalization of the famous Kodaira embedding theorem.
\end{abstract}

\vspace{15pt}

\section{Introduction}

\vspace{10pt}

The famous Kodaira embedding theorem (see e.g. \cite[\S 1.4]{G69}) asserts  that any compact complex manifold admitting a positive line bundle can be embedded to some projective space $\mathbb CP^N$. In this paper, we consider a generalization associated with holomorphic fibration.

\begin{theorem}\label{mthm} Let $f:X\rightarrow Y$ be a non-singular holomorphic fibration with $Y$ and all the fibers being compact and connected. If there exists a line bundle $L$ on $X$ such that for any $y \in Y$, $L|_{f^{\!-\!1}(y)}\rightarrow f^{\!-\!1}(y)$ is positive. Then we can find a holomorphic vector bundle $\pi_E:E\rightarrow Y$ and a holomorphic embedding $j:X\rightarrow P(E^*)$ satisfying $f=\pi_{P(E^*)}\circ j$, where $\pi_{P(E^*)}$ is the natural projection from $P(E^*)$ to $Y$.
\end{theorem}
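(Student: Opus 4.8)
The plan is to establish a relative version of the classical Kodaira embedding theorem: for $k$ large enough the direct image $E:=f_*(L^k)$ will be a holomorphic vector bundle on $Y$, and fibrewise evaluation of the sections of $L^k$ will produce the desired embedding $j:X\to P(E^*)$ over $Y$. Write $X_y:=f^{\!-\!1}(y)$. Since $f$ is proper and $Y$ is compact, $X$ is compact, and the whole point of the argument is that every assertion reduces to a \emph{fibrewise} statement about $L^k|_{X_y}$, which must then be made \emph{uniform in $y\in Y$}; compactness of $X$ is what makes this possible.

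First I would fix a Hermitian metric $h$ on $L\to X$ whose curvature is positive along \emph{every} fibre. For $y_0\in Y$, the positive metric on $L|_{X_{y_0}}$ extends to a metric on $L$ over $f^{\!-\!1}(U)$ for some neighbourhood $U\ni y_0$, still fibrewise positive over a possibly smaller neighbourhood by openness of positivity. Covering $Y$ by finitely many such neighbourhoods and gluing the local metrics with a partition of unity on $Y$ \emph{pulled back via $f$} yields $h$: the pulled-back cut-offs are constant along the fibres, so no cross terms appear in fibre directions and $\i\Theta_h|_{X_y}$ is, for each $y$, a positive convex combination of positive $(1,1)$-forms. Fixing also a Hermitian metric $\o$ on $X$, compactness of $X$ gives constants $\varepsilon,C>0$ with $\varepsilon\,\o|_{X_y}\le \i\Theta_h|_{X_y}\le C\,\o|_{X_y}$, together with a uniform bound on the curvature of the relative canonical bundle $K_{X/Y}$, all independent of $y$.

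The analytic core is a uniform effective Kodaira estimate. Running the $L^2$-method of H\"ormander--Demailly (equivalently Kodaira--Nakano vanishing together with Serre duality) on each fibre $X_y$ with the fixed background $(h,\o)$, one obtains a single $k_0$, independent of $y$, so that for all $k\ge k_0$ and all $y\in Y$: (i) $H^q(X_y,L^k|_{X_y})=0$ for $q\ge1$; (ii) $L^k|_{X_y}$ is globally generated; (iii) $L^k|_{X_y}$ separates points and $1$-jets. These follow by applying the vanishing to $L^k|_{X_y}$ twisted by multiplier ideal sheaves of one point, of two points, and of $\mathcal I_x^2$; since $L^k\otimes K_{X/Y}^{-1}$ has fibrewise curvature $\ge \frac{k\varepsilon}{2}\,\o$ once $k$ is large, the needed positivity is reached with a threshold depending only on $\varepsilon$, $C$, $\dim X_y$ and the $\o$-geometry of the compact manifold $X$. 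This uniformity in $y$ is exactly the step I expect to be the main obstacle --- the complex structure of $X_y$ and the bundle $L^k|_{X_y}$ both move with $y$, and the pulled-back partition of unity above is precisely the device that converts the qualitative fibrewise positivity hypothesis into base-independent quantitative bounds. Granting (i), the theorem on cohomology and base change (Grauert) shows that $E=f_*(L^k)$ is a holomorphic vector bundle on $Y$ with $E_y=H^0(X_y,L^k|_{X_y})$, compatibly with restriction to fibres.

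It remains to assemble $j$. By (ii) the relative evaluation $f^*E\to L^k$ of holomorphic bundles on $X$ is surjective; dualizing gives a nowhere-vanishing holomorphic section of $f^*E^*\otimes L^k$, hence a holomorphic map $X\to P(f^*E^*)=X\times_Y P(E^*)\to P(E^*)$, i.e. a holomorphic $j$ with $f=\pi_{P(E^*)}\circ j$, sending $x$ to the evaluation functional $s\mapsto s(x)$ on $H^0(X_{f(x)},L^k|_{X_{f(x)}})$. Because $f=\pi_{P(E^*)}\circ j$, two points with equal image lie in one fibre, and (iii) then forces them to coincide, so $j$ is injective; likewise $dj$ is injective in vertical directions by (iii), and in horizontal directions because $df=d\pi_{P(E^*)}\circ dj$ with $df$ injective there while the vertical and horizontal parts of $dj$ have complementary images, so $j$ is an immersion. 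As $X$ is compact, $j$ is a holomorphic embedding, and $E$ is the desired vector bundle.
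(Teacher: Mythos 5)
Your proposal is correct in outline but takes a genuinely different route from the paper at both technical pinch points, so a comparison is worthwhile. To put a holomorphic structure on $E=f_*(L^k)$ with $E_y\cong H^0(X_y,L^k|_{X_y})$, you use fibrewise vanishing of higher cohomology together with Grauert's base-change theorem; the paper instead quotes Berndtsson's extension theorem (every holomorphic section of $L\otimes\mathcal K_{M/U}$ on a fibre extends to a neighbourhood), and for that it must upgrade the fibrewise-positive metric $h=\prod_i h_{p_i}^{\alpha_i\circ f}$ to a metric that is positive in \emph{all} directions over $f^{-1}(\overline U)$ by multiplying with $e^{-K\,\phi\circ f}$, $\phi=|z|^2$ --- a step you do not need, since your argument only ever uses curvature along the fibres. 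For the uniform power $k$, the paper argues softly: for each fixed $k$ the condition ``$\phi_{k,q}$ is an embedding'' is open in $q$, so very ampleness of $L^{k_p},\dots,L^{2k_p-1}$ over a point $p$ propagates to a neighbourhood $V_p$, the elementary fact that (very ample)$\,\otimes\,$(globally generated) is very ample then covers all $k\ge k_p$ over $V_p$, and compactness of $Y$ finishes. You instead assert a single effective threshold $k_0$ valid for all fibres simultaneously, obtained from uniform $L^2$/Nadel-type estimates; this is exactly the ``family version of the original Kodaira proof'' that the paper mentions and deliberately omits. The remaining pieces --- the pulled-back partition of unity, the evaluation map $f^*E\to L^k$, and the verification that $j$ is an injective immersion hence an embedding --- are correct and agree with the paper.

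The one place where your write-up leaves real mathematical content unsupplied is that uniform effective estimate. Separating two arbitrary points, or a point and its $1$-jet, on $X_y$ by the $L^2$ method requires constructing singular weights with logarithmic poles whose loss of positivity is bounded independently of $y$ \emph{and} of the positions of the points, including pairs of points that degenerate towards each other; the usual devices (cutoffs times logarithms of coordinate distances, or blow-ups) need uniform control of coordinate charts and injectivity radii along the varying fibres. This can indeed be carried out on a compact smooth family, but it is a genuine estimate and not a formal consequence of the two-sided curvature bound $\varepsilon\,\omega|_{X_y}\le\sqrt{-1}\,\Theta_h|_{X_y}\le C\,\omega|_{X_y}$; you should either supply it or fall back on the openness-plus-compactness argument, which is the cheaper path and is the one the paper actually takes.
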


In algebraic geometry, $L$ is called $f$-ample or ample relative to $f$ (see e.g. \cite[\S 1.7]{L04I}), and one may give an analogous proof of Theorem \ref{mthm}. However, in this paper, we only give a proof using analytical method.

\medskip

We consider the following vector bundle in form
\begin{equation}E_k=\mathop{\bigsqcup}\limits_{p\in Y} H^0(L^k|_{f^{\!-\!1}(p)},f^{\!-\!1}(p)).
\end{equation}
where $k\geq 0$. By choosing sufficiently large $k$, we can find a proper holomorphic structure on $E_k$, and construct fiber-wise holomorphic embeddings following the proof of the Kodaria embedding theorem. These fiber-wise embedding maps together form the required global one.

\vspace{15pt}

\section{Looking back on the Kodaira embedding theorem}

\vspace{10pt}

Let $M$ be a compact complex manifold and $L$ be a positive line bundle on $M$. We look back to the construction of the embedding map from $M$ to certain $\mathbb CP^N$ in the proof of the Kodaira embedding theorem.

\medskip

For any $k\geq 0$, let
\begin{equation}
N_k=\dim H^0(L^k,M)-1.
\end{equation}
Since $L$ is positive, when $k$ is sufficiently large, $L^k$ is generated by its global holomorphic sections, i.e. for any $p\in M$ and $v\in L^k|_p$, there exist a global holomorphic section $s$ of $L^k$, such that $s(p)=v$. Let $\{s_{0},s_{1},\cdots,s_{N_k}\}$ be a basis of $H^0(L^k,M)$, then for any $p\in M$, some $s_i(p)$ is nonzero. By this fact, we can define a holomorphic map $\varphi_{k}:M\rightarrow CP^{N_k}$ by
\begin{equation}\varphi_k(p)=[f_0(p),f_1(p),\cdots,f_{N_k}(p)],
\end{equation}
where $f_i e=s_i$ and $e$ is an arbitrary nonzero local holomorphic section of $L^k$ around $p$. $\varphi_k$ is well-defined, and it becomes an embedding when $k$ is sufficiently large.

\medskip

In fact this construction is ``canonical" in some sense. Let $V_k^*$ be the dual space of $H^0(L^k,M)$. For any $p\in M$ and any $e^*\in L^k|_p\setminus\{0\}$, $e^*$ determines a linear homogeneous function $s^*_{e^*}$ on $H^0(L^k,M)$, i.e. an element in $V_k^*$, by setting
\begin{equation}s^*_{e^*}(s)=e^*(s(p)).
\end{equation}
If there exists an $s\in H^0(L^k,M)$ such that $s(p)\neq 0$, then $s^*_{e^*}$ is nozero and determines an element $[e^*]=[s^*_{e^*}]$ in $P(V_k^*)\cong \mathbb CP^{N_k}$, which doesn't depend on the choice of $e^*\in L^{\!-\!k}|_p\setminus\{0\}$. When $L^k$ is generated by its global holomorphic sections, the map $\phi_k:M\rightarrow P(V_k^*)$ defined by $p\mapsto [e^*]$ is well defined and holomorphic.

Using the basis $\{s_{0},s_{1},\cdots,s_{N_k}\}$ of $H^0(L^k,M)$, we can define an isomorphism $\rho_k:V_k\rightarrow \mathbb CP^n$
\begin{equation}\rho_k([s^*])=[s^*(e_0),s^*(e_1),\cdots,s^*(e_{N_k})].
\end{equation}
One can check that $\rho\circ\phi_k=\varphi_k$, so $\phi_k$ and $\varphi_k$ are essentially the same one.

\medskip

Let $f:X\rightarrow Y$ and $L$ be as in Theorem \ref{mthm}, and let $E_k$ as mentioned  in the introduction. By the smoothness of the fibration and the compactness of $Y$, it is not difficult to prove that $\dim E_k|_p$ is a constant and $E_k$ admits a ``natural" differential structure such that as a differential sheaf
\begin{equation}
E_k\cong f_*(\mathcal O_{X/Y}(L^k)),
\end{equation}
where $\mathcal O_{X/Y}(L^k)$ is the sheaf generated by the smooth sections of $L^k$ which are fiber-wisely holomorphic. Moreover, for any $p\in Y$, when $k$ is sufficiently large, $L^k|_{f^{\!-\!1}(p)}$ is generated by its global holomorphic sections and the canonical holomorphic map $\phi_{k,p}:f^{\!-\!1}(p)\rightarrow P((E_k|_p)^*)$ is an embedding, equivalently $L^k|_{f^{\!-\!1}(p)}$ is very ample. Later we will show that we can find a uniform $k$ such that all $L^k|_{f^{\!-\!1}(p)}$ is very ample.

\vspace{15pt}

\section{Extending global holomorphic sections on a fiber}

\vspace{10pt}

Let $f:X\rightarrow Y$ and $L$ be as in Theorem \ref{mthm}. As mentioned in the introduction, we set
\begin{equation}E_k=\mathop{\bigsqcup}\limits_{p\in Y} H^0(L^k|_{f^{\!-\!1}(p)},f^{\!-\!1}(p)).
\end{equation}
Defining a holomorphhic structure on some $E_k$ is equivalent to defining proper holomorphic sections of $E_k$. A natural way is to consider the sections which can be induced by holomorphic sections of $L^k$. For any non-empty open set $U\subset Y$, we denote $F(U,E_k)$ to be the set of sections $s$ of $E_k$ on $U$ satisfying
\begin{equation}s(p)=\tilde s|_{f^{\!-\!1}(p)},\qquad \forall p\in U,
\end{equation}
for some $\tilde s\in \Gamma(f^{\!-\!1}(U),L^k)$. In order that $\{F(U,E_k)\}$ truly determines a holomorphic structure on $E_k$, we only need to make sure  that for any $p\in Y$ and $s\in H^0(f^{\!-\!1}(p),L^k|_{f^{\!-\!1}(p)})$, $s$ can be extended to a local holomorphic section of $L^k$ on a neighborhood of $f^{\!-\!1}(p)$.  In fact, the assignment $U\mapsto F(U,E_k)$ corresponds to the coherent analytic sheaf $f_*(L^k)$. When the mentioned condition holds, we actually have $E_k\cong f_*(L^k)$.

\medskip

The following lemma is due to Berndtsson.
\begin{lemma}[{\cite[Theorem 8.1]{B09}}] \label{ber09}
Let $\pi:M\rightarrow U$ be a non-singular holomorphic fibration over the unit ball $U\subset \mathbb C^n$, with compact and connected fibers. Assume $M$ is K\"ahler and $L$ is a semi-positive line bundle over $M$. Then any $s\in H^0((L\otimes \mathcal K_{M/U})|_{\pi^{-1}(0)},\pi^{-1}(0))$ can be extended to a local holomorphic section of $L\otimes \mathcal K_{M/U }$ on $M$.
Consequently $\pi_*(L\otimes \mathcal K_{M/U })$ is a holomorphic vector bundle.
\end{lemma}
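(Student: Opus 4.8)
The first thing I would observe is that the extension statement is the whole point: once one knows every $s\in H^0((L\otimes\mathcal K_{M/U})|_{\pi^{-1}(0)},\pi^{-1}(0))$ extends, the local freeness of $\mathcal F:=L\otimes\mathcal K_{M/U}$ pushed forward follows from standard coherent-sheaf theory. Since $\mathcal F$ is a line bundle on $M$ and $\pi$ is a submersion, $\mathcal F$ is flat over $U$, so $\pi_*\mathcal F$ is coherent and $h(t):=\dim_{\mathbb C}H^0(\pi^{-1}(t),\mathcal F|_{\pi^{-1}(t)})$ is upper semi-continuous. Applying the extension property at an arbitrary point $t_0$ (after a translation and rescaling of $U$ this is literally the situation of the Lemma), a basis $s_1,\dots,s_N$ of $H^0(\pi^{-1}(t_0),\mathcal F|_{\pi^{-1}(t_0)})$ lifts to holomorphic sections $\sigma_1,\dots,\sigma_N$ of $\mathcal F$ near $\pi^{-1}(t_0)$, whose restrictions to $\pi^{-1}(t)$ remain linearly independent for $t$ near $t_0$ (independence is witnessed by non-vanishing of a holomorphic function of $t$, non-zero at $t_0$). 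Hence $h(t)\ge N$ near $t_0$, which together with upper semi-continuity forces $h\equiv N$ in a neighborhood of $t_0$; as $U$ is connected, $h$ is constant. By Grauert's base-change theorem $\pi_*\mathcal F$ is then locally free of rank $h$, with $(\pi_*\mathcal F)\otimes\mathbb C(t)\cong H^0(\pi^{-1}(t),\mathcal F|_{\pi^{-1}(t)})$; this is the asserted holomorphic vector bundle.

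The substantive step is therefore the $L^2$ extension, and the plan is to run an Ohsawa--Takegoshi type argument adapted to the K\"ahler fibration. Fix a smooth metric on $L$ with weight $\varphi$ satisfying $\i\pbp\varphi\ge 0$. Writing $\pi=(\pi_1,\dots,\pi_n)$, the central fiber is the smooth complete intersection $\pi^{-1}(0)=\{\pi_1=\cdots=\pi_n=0\}$ of codimension $n$, and wedging a section of $\mathcal K_{M/U}\otimes L$ with $\pi^*(dt_1\wedge\cdots\wedge dt_n)$ identifies it with a section of $\mathcal K_M\otimes L$; so the problem becomes the extension of an $L$-valued holomorphic $(m{+}n,0)$-form ($m=\dim\pi^{-1}(0)$) from $\pi^{-1}(0)$. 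I would take a smooth extension $\tilde s$, cut off near $\pi^{-1}(0)$ and chosen so that $\bar\partial\tilde s$ vanishes along $\pi^{-1}(0)$, and then solve $\bar\partial v=\bar\partial\tilde s$ with an $L^2$ estimate against a weight of the form $\varphi+\chi(\log|\pi|^2)$ for a suitable convex $\chi$; the singularity of the weight along $\pi^{-1}(0)$ forces $v$ to vanish there, so $s^{\mathrm{ext}}:=\tilde s-v$ is holomorphic and restricts to $s$. Since $M$ is K\"ahler the Bochner--Kodaira--Nakano identity is available, and since each $\pi_j$ is holomorphic the function $\log|\pi|^2$ is plurisubharmonic, so the extra curvature it contributes is nonnegative; this is what allows the estimate to close even though the total space $M$ (having compact fibers) is \emph{not} pseudoconvex. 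Compactness of $\pi^{-1}(0)$ makes $\int_{\pi^{-1}(0)}|s|^2e^{-\varphi}<\infty$, so the estimate produces a section on all of $M$, and in particular near every fiber, which is what the first paragraph needs.

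The main obstacle is precisely this last analytic point: producing the $L^2$ extension with estimate when the ambient space is not pseudoconvex — this is the heart of Berndtsson's theorem, and it is where the K\"ahler hypothesis is genuinely used (on a general complex $M$ there is no Nakano identity and the $\bar\partial$-estimate fails). I would also remark that if one strengthened ``semi-positive'' to ``$L|_{\pi^{-1}(t)}$ positive for every $t$'', the analysis could be bypassed: Kodaira--Nakano vanishing would give $H^q(\pi^{-1}(t),\mathcal F|_{\pi^{-1}(t)})=0$ for $q\ge 1$, hence $R^q\pi_*\mathcal F=0$ near each point, so $\chi(\pi^{-1}(t),\mathcal F|_{\pi^{-1}(t)})=h(t)$ would be locally constant by deformation invariance of the Euler characteristic, again yielding local freeness via Grauert. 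It is exactly because only semi-positivity is assumed that the analytic extension argument is indispensable.
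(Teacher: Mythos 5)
The paper offers no proof of this lemma to compare against: it is imported verbatim as Theorem 8.1 of Berndtsson's Annals paper, and the authors use it as a black box. So your sketch can only be judged on its own merits and against the cited source. The second half of your argument --- deducing local freeness of $\pi_*(L\otimes \mathcal K_{M/U})$ from the extension property via upper semicontinuity of $t\mapsto h^0(\pi^{-1}(t),\cdot)$, constancy on the connected base, and Grauert's base-change theorem --- is correct and standard. Your closing remark is also correct and, in fact, directly relevant to this paper: in the only place Lemma \ref{ber09} is invoked (Lemma \ref{lf}), the bundle fed into it is $L^k\otimes\mathcal K_{M/N}^{-1}$, which for large $k$ is genuinely positive on all of $\pi^{-1}(\overline U)$; so fibrewise Kodaira vanishing, deformation-invariance of the Euler characteristic, and Grauert's theorem already give local freeness and surjective base change (hence extension of every fibrewise section) with no $L^2$ theory at all. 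The full semipositive statement is needed only if one insists on the generality in which Berndtsson states it.

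The gap is exactly where you flag it, but it is worth naming precisely. First, a small misdiagnosis: $M$ \emph{is} weakly pseudoconvex (pull back a plurisubharmonic exhaustion of the ball by the proper map $\pi$), so pseudoconvexity is not the obstruction. The obstruction is that the error term $\bar\partial(\theta\tilde s)$, whose dominant part is $\theta'\,\bar\partial\log|\pi|^2\wedge\tilde s$, is \emph{not} square-integrable against the singular weight $|\pi|^{-2n}$ that you need in order to force $v|_{\pi^{-1}(0)}=0$; the observation that $\i\pbp\log|\pi|^2\geq 0$ is automatic and never closes the estimate by itself. One must run a twisted Bochner--Kodaira inequality with a carefully chosen bounded auxiliary function, and carrying this out when the total space is merely K\"ahler (not projective, not Stein) and the curvature is only semipositive was precisely the open difficulty at the time: the Ohsawa--Takegoshi theorem in this K\"ahler generality was only established years later (Cao; Zhou--Zhu; Guan--Zhou). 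Berndtsson's own proof does not introduce a global singular weight but argues fibrewise, using the K\"ahler identities and harmonic theory on the compact fibres to kill the obstruction classes. So your sketch correctly isolates, but does not supply, the analytic content of the lemma; since the paper itself only cites it, that is an acceptable position, provided the two-line $L^2$ heuristic is not presented as a proof.
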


For our usage, we need the following lemma
\begin{lemma}\label{lf}
Let $\pi:M\rightarrow N$ be a non-singular holomorphic fibration  with compact and connected fibers. And let $L$ be a holomorphic line bundle over $M$ whose restriction to any fiber $\pi^{\!-\!1}(p)\ (p\in N)$ is positive. If $U$ is a domain on $N$ satisfying that there exists a local holomorphic chart $\{\tilde U;z^1,\cdots z^n\}$ such that $\overline U=\{|z|\leq1\}$. Then there exists a $k_0\geq 1$, such that when $k\geq k_0$, for any $q\in U$, any $s\in H^0(L^k|_{\pi^{\!-\!1}(q)},\pi^{\!-\!1}(q))$ can be extended to a local holomorphic section of $L^k$ on a neighborhood of $\pi^{-1}(q)$.
Consequently $\pi_*(L^k)$ is a holomorphic vector bundle over $U$.
\end{lemma}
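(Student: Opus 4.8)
The plan is to reduce the statement to Berndtsson's Lemma \ref{ber09}. The key algebraic point is that over an open ball $B\subset N$ one has the identification $\mathcal K_{\pi^{-1}(B)/B}=\mathcal K_{M/N}|_{\pi^{-1}(B)}$, whose restriction to a fibre $\pi^{-1}(p)$ agrees, up to a trivial twist, with the canonical bundle $\mathcal K_{\pi^{-1}(p)}$; hence over $\pi^{-1}(B)$ we have $(L^k\otimes\mathcal K_{M/N}^{-1})\otimes\mathcal K_{\pi^{-1}(B)/B}=L^k$. Consequently, if for a fixed $q\in U$ we can find a small ball $q\in B$ with $\overline B$ contained in $\tilde U$ such that $\pi^{-1}(B)$ is K\"ahler and $L^k\otimes\mathcal K_{M/N}^{-1}$ carries a smooth Hermitian metric with semi-positive curvature on $\pi^{-1}(B)$, then applying Lemma \ref{ber09} over $B$ (identified affinely with the unit ball so that $q\mapsto 0$) to the line bundle $L^k\otimes\mathcal K_{M/N}^{-1}$ extends any $s\in H^0(L^k|_{\pi^{-1}(q)},\pi^{-1}(q))$ to a local holomorphic section of $L^k$ on a neighbourhood of $\pi^{-1}(q)$, which is exactly the conclusion sought. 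So the real task is to produce that metric with a threshold $k_0$ independent of $q$, and to see that $\pi^{-1}(B)$ is K\"ahler.

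First I would construct, over $\pi^{-1}$ of a relatively compact neighbourhood of $\overline U$ inside $\tilde U$, a smooth Hermitian metric $h_L$ on $L$ whose curvature $\sqrt{-1}\,\Theta_{h_L}$ is positive on every vertical tangent vector. Locally this is clear: near a point $p$ one extends a positively curved metric on the positive line bundle $L|_{\pi^{-1}(p)}$ to a metric on $L$ over $\pi^{-1}(W_p)$, shrinking $W_p$ so that, by compactness of $\pi^{-1}(p)$, fibrewise positivity survives on all of $\pi^{-1}(W_p)$. One then glues finitely many such metrics by a partition of unity $\{\chi_j\}$ pulled back from the base: since $\overline\partial(\pi^*\chi_j)$ vanishes along fibres, the restriction to any fibre of the curvature of the glued metric is a convex combination of positive $(1,1)$-forms, hence positive. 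By compactness there are $\varepsilon>0$ and a fixed reference metric $\o_0$ with $\sqrt{-1}\,\Theta_{h_L}\ge\varepsilon\,\o_0$ on all vertical vectors over $\overline U$. Fixing in addition any smooth metric $h_K$ on $\mathcal K_{M/N}$ whose curvature satisfies $-C\,\o_0\le\sqrt{-1}\,\Theta_{h_K}\le C\,\o_0$ on $\pi^{-1}(\overline U)$, I set $k_0=\lceil C/\varepsilon\rceil+1$. Then for every $k\ge k_0$ the metric $h_L^{\,k}\otimes h_K^{-1}$ on $L^k\otimes\mathcal K_{M/N}^{-1}$ has curvature $k\sqrt{-1}\,\Theta_{h_L}-\sqrt{-1}\,\Theta_{h_K}$, which is positive on every vertical vector over $\overline U$; this is the only place where $k_0$ enters, and it is manifestly uniform in $q$.

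Now fix $k\ge k_0$ and $q\in U$, choose a ball $B=B(q,r)$ with $\overline B$ inside the region where $h_L$ is defined, and replace the metric on $\pi^{-1}(B)$ by $h_L^{\,k}\otimes h_K^{-1}\cdot e^{-A\pi^*|z-z(q)|^2}$, whose curvature is $k\sqrt{-1}\,\Theta_{h_L}-\sqrt{-1}\,\Theta_{h_K}+A\,\pi^*(\sqrt{-1}\,\pbp|z-z(q)|^2)$. With respect to a fixed smooth splitting of the holomorphic tangent bundle of $\pi^{-1}(B)$ into a vertical and a horizontal part, the vertical block of this Hermitian form is already positive and the remaining blocks are bounded on the compact set $\pi^{-1}(\overline B)$, so a Schur-complement estimate shows that for $A$ large (depending on $k$ and $q$, which is harmless) the whole form is positive definite. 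Hence $L^k\otimes\mathcal K_{M/N}^{-1}$ is positive on $\pi^{-1}(B)$, and the curvature form just written, being a closed positive $(1,1)$-form, is a K\"ahler metric on $\pi^{-1}(B)$; in particular $\pi^{-1}(B)$ is K\"ahler. Feeding this into Lemma \ref{ber09} as in the first paragraph finishes the extension statement. For the closing assertion: for $k\ge k_0$ the bundle $L^k|_{\pi^{-1}(p)}\otimes\mathcal K_{\pi^{-1}(p)}^{-1}$ is positive on every fibre over $\overline U$, so Kodaira vanishing gives $H^j(\pi^{-1}(p),L^k|_{\pi^{-1}(p)})=0$ for $j\ge1$; hence by Grauert's theorem $R^j\pi_*L^k=0$ for $j\ge1$ and $\pi_*L^k$ is locally free over $U$, which together with the extension property is the asserted holomorphic vector bundle, exactly as in the concluding sentence of Lemma \ref{ber09}.

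I expect the main obstacle to be the construction of $h_L$ with a fibrewise lower curvature bound that is uniform over $\overline U$ — this is precisely what makes $k_0$ independent of $q$ — together with the elementary but slightly fussy linear-algebra step (the Schur-complement estimate) ensuring that adding a large multiple of $\pi^*$ of the Euclidean form restores positivity of the full curvature on $\pi^{-1}(B)$. Everything else is either standard or a direct invocation of Berndtsson's lemma.
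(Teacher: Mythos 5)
Your proposal is correct and follows essentially the same route as the paper: glue fibrewise positively curved metrics on $L$ via a partition of unity pulled back from the base, restore full positivity by multiplying with $e^{-A\pi^*\phi}$ for a strictly plurisubharmonic $\phi$ on the chart, absorb the curvature of $\mathcal K_{M/N}$ for $k\ge k_0$, deduce that the total space over the ball is K\"ahler, and apply Berndtsson's extension theorem to $L^k\otimes\mathcal K_{M/N}^{-1}$. Your version is in fact slightly more careful than the paper's on two points it glosses over — the explicit uniform choice of $k_0$ via the $\varepsilon$/$C$ comparison, and the recentering of the ball at $q$ so that Berndtsson's lemma (stated for the central fibre) applies to every fibre over $U$.
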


\begin{proof}
First we show that over $\pi^{\!-\!1}(\overline U)$, $L$ admits a Hermitian metric with positive curvature. For any $p\in N$, $L|_{\pi^{\!-\!1}(p)}$ is positive, so we can find a smooth $h_p$ on $L$ over a neighborhood of $\pi^{\!-\!1}(p)$, whose restriction to $\pi^{\!-\!1}(p)$ admits positive curvature. By the smoothness of $h_p$, we can find an open neighborhood $U_p$ of $p$ such that for any $q\in U_p$ the restriction of $h_p$ over $\pi^{\!-\!1}(q)$ admits positive curvature. Since $\overline U$ is compact, we can find finite many $p_i$ such that $\overline U\subset \mathop{\bigcup}\limits_i U_{p_i}$. Let $\{\alpha_i\}$ be an partition of unity of  $\overline U$ subject to $\{U_{p_i}\}$. We can define a Hermitian metric $h$ on $L$ as
\begin{equation}h=\prod_i h_{p_i}^{\alpha_i\circ \pi}.
\end{equation}
One can easily check that  for any $q\in \overline U$ the restriction of $h$ over $\pi^{\!-\!1}(q)$ admits positive curvature.

By the local holomorphic chart $\{\tilde U;z^1,\cdots z^n\}$, the function $\phi=|z|^2$ satisfies $\i\pbp \phi>0$ on $\overline U$. Then it is easy to see, when $K\geq 0$ is sufficiently large, the Hermitian metric $he^{-K\phi\circ f}$ over $\pi^{\!-\!1}(\overline U)$ admits positive curvature.

\medskip

Since $L|_{\pi^{\!-\!1}(\overline U)}$ admits a Hermitian metric with positive curvature, $\pi^{\!-\!1}(\overline U)$ is K\"ahler. At the same time, when $k$ is sufficiently large, $(L^k\otimes \mathcal K_{M/N}^{\!-\!1})|_{\pi^{\!-\!1}(\overline U)}$ admits a Hermitian metric with positive curvature. By applying Theorem \ref{ber09}, we can complete the proof.
\end{proof}

Back to Theorem \ref{mthm}, by Lemma \ref{lf} and the compactness of $Y$, when $k$ is sufficiently large, $E_k$ admits a canonical holomorphic structure such that $E_k\cong \pi_*(L^k)$.

\vspace{15pt}

\section{Proof of Theorem \ref{mthm}}

\vspace{10pt}

First, we fix some sufficiently large $k_0$ such that for any $k\geq k_0$, $E_k$ admits a canonical holomorphic structure such that $E_k\cong \pi_*(L^k)$.

For any $p\in Y$, since $L|_{f^{\!-\!1}(p)}$ is positive, by the (proof of) the Kodaia embedding theorem, when $k$ is sufficiently large, the canonical map $\phi_{k,p}:f^{\!-\!1}(p)\rightarrow P((E_k|_p)^*)$ is a well-defined embedding, equivalently $L^k|_{f^{\!-\!1}(p)}$ is very ample. To complete the proof, we need to find a uniform $k\geq k_0$ such that for any $p\in Y$, $L^k|_{f^{\!-\!1}(p)}$ is very ample.

For any $p\in Y$, let $k_p\geq k_0$ be an integer such that for any $k\geq k_p$, $L^k|_{f^{\!-\!1}(p)}$ is very ample. Fix a $k\geq k_p$, by the fact $E_k\cong f_*(L^k)$, we can find an open neighborhood $U_{k,p}$ of $p$, and  local holomorphic sections $e_0,e_1,\cdots,e_{N_k}$ such that for any $q\in U_{k,p}$, $\{e_0|_{f^{\!-\!1}(q)},e_1|_{f^{\!-\!1}(q)},\cdots, e_{N_k}|_{f^{\!-\!1}(q)}\}$ is a basis of $E_k|_q\cong H^0(f^{\!-\!1}(q),L^k|_{f^{\!-\!1}(q)})$.
By the smoothness of $e_0,e_1,\cdots,e_{N_k}$, for any $q$ in a certain smaller open neighborhood $U_{k,p}'$, $e_0|_{f^{\!-\!1}(q)},e_1|_{f^{\!-\!1}(q)},\cdots, e_{N_k}|_{f^{\!-\!1}(q)}$ have no common zeros so the canonical map $\phi_{k,q}$ is well-defined.
Further, all $\phi_{k,q}\ (q\in U_{k,p}')$ forms a holomorphic map $\phi_{k}:f^{\!-\!1}(U_{k,p}')\rightarrow P(E_k^*)$. We have the following trivialization map $\rho_{k,p}:P(E_k^*)|_{U_{k,p}'}\rightarrow U_{k,p}'\times \mathbb CP^{N_k}$
\begin{equation}\rho_{k,p}(s^*)=(\pi_k(s^*),[s^*(e_0),s^*(e_1),\cdots,s^*(e_{N_k})]),
\end{equation}
where $\pi_k:P(E_k^*)|_{U_{k,p}'}\rightarrow U_{k,p}'$ is the natural projection. Then we have the following expression for $\phi_{k}$
\begin{equation}\rho_{k,p}\circ\phi_{k}(x)=(\pi_k(l^*),[l^*(e_0),l^*(e_1),\cdots,l^*(e_{N_k})]),
\end{equation}
where $l^*$ is an arbitrary nonzero local holomorphic section of $L^*|_{f^{\!-\!1}(U_{k,p}')}$. By this expression it is easy to check that $\phi_k$ is holomorphic.
The fact that $\phi_{k,p}$ is an embedding implies that for any $q$ in a certain even smaller open neighborhood $U_{k,p}''$, $\phi_{k,q}$ is also an embedding. Consequently $\phi_k$ is an embedding on $f^{\!-\!1}(U_{k,p}'')$.

\medskip

Let $p$, $k_p$ and $U_{k,p}''$ be as above, we set $V_p=\bigcap_{k=k_p}^{2k_p\!-\!1}U_{k,p}''$. Then for any $k=k_p,k_p\!+\!1,\cdots,2k_p\!-\!1$, and $q\in V_p$, $L^k|_{f^{\!-\!1}(q)}$ is very ample. By the following Corollary \ref{vamp2}, for any $k\geq k_p$ and $q\in V_p$, $L^k|_{f^{\!-\!1}(q)}$ is very ample.

\begin{lemma}\label{vamp1}
Let $M$ be a compact complex manifold. If $L_1\rightarrow M$ is a very ample line bundle, and $L_2\rightarrow M$ is a holomorphic line bundle which is globally generated. Then $L_1\otimes L_2$ is very ample.
\end{lemma}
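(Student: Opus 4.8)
The statement is a classical fact: if $L_1$ is very ample and $L_2$ is globally generated, then $L_1 \otimes L_2$ is very ample. I would prove this by exhibiting enough sections of $L_1 \otimes L_2$ to separate points and tangent vectors, building them as products of sections of $L_1$ with sections of $L_2$. The first step is to recall what very ampleness means concretely: $L_1$ very ample means there are global holomorphic sections $\sigma_0, \dots, \sigma_N$ of $L_1$ with no common zero, giving an embedding $\psi_1 \colon M \to \mathbb{C}P^N$, $\psi_1(x) = [\sigma_0(x) : \cdots : \sigma_N(x)]$ (interpreted via a local frame of $L_1$). Global generation of $L_2$ means there are global holomorphic sections $\tau_0, \dots, \tau_m$ of $L_2$ with no common zero.

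\textbf{Key steps.} First, I would check base-point-freeness of $L_1 \otimes L_2$: for any $x \in M$, pick $\sigma_i$ with $\sigma_i(x) \neq 0$ and $\tau_j$ with $\tau_j(x) \neq 0$; then $\sigma_i \otimes \tau_j$ is a global section of $L_1 \otimes L_2$ nonvanishing at $x$. Hence $L_1 \otimes L_2$ is globally generated and defines a holomorphic map $\Phi \colon M \to \mathbb{C}P^{L}$ using a basis of $H^0(M, L_1 \otimes L_2)$; since the products $\sigma_i \otimes \tau_j$ all lie in $H^0(M, L_1 \otimes L_2)$, the map $\Phi$ refines (is followed by a linear projection onto) the Segre-type map $x \mapsto ([\sigma_i(x)]_i, [\tau_j(x)]_j) \mapsto [\sigma_i(x)\tau_j(x)]_{i,j}$, i.e. the composition of $(\psi_1, \psi_2)$ with the Segre embedding $\mathbb{C}P^N \times \mathbb{C}P^m \hookrightarrow \mathbb{C}P^{(N+1)(m+1)-1}$. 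Second, injectivity: if $\Phi(x) = \Phi(x')$ then in particular the ratios $[\sigma_i(x)\tau_j(x)] = [\sigma_i(x')\tau_j(x')]$ agree; fixing a $j$ with $\tau_j(x) \neq 0$ and $\tau_j(x') \neq 0$ (possible after checking $\Phi(x)=\Phi(x')$ forces $\psi_2(x)=\psi_2(x')$, so the same index works), this gives $[\sigma_i(x)]_i = [\sigma_i(x')]_i$, i.e. $\psi_1(x) = \psi_1(x')$, so $x = x'$ because $\psi_1$ is injective. Third, immersion: at a point $x$ I would work in a local frame $e_1$ of $L_1$ near $x$ with $e_1$ a unit and a frame $e_2$ of $L_2$ near $x$, write $\sigma_i = f_i e_1$, $\tau_j = g_j e_2$, and show that the differential of $\Phi$ at $x$ is injective. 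Because $\psi_1$ is an immersion, the functions $f_i/f_{i_0}$ (with $f_{i_0}(x)\neq 0$) have differentials spanning the cotangent space $T_x^* M$; since each $\sigma_i \otimes \tau_{j_0} = f_i g_{j_0} e_1 \otimes e_2$ is among our sections and $g_{j_0}(x) \neq 0$, the functions $f_i g_{j_0}/(f_{i_0} g_{j_0}) = f_i/f_{i_0}$ appear among the affine coordinates of $\Phi$ near $x$, so $d\Phi_x$ is injective too. Thus $\Phi$ is an injective holomorphic immersion of the compact manifold $M$, hence an embedding, and $L_1 \otimes L_2$ is very ample.

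\textbf{Main obstacle.} The routine but slightly delicate point is the bookkeeping in the immersion step: one must choose the local frames compatibly, confirm that the product sections $\sigma_i \otimes \tau_{j_0}$ really do furnish, after passing to affine coordinates on the target projective space, the same coordinate functions $f_i / f_{i_0}$ that witness the immersivity of $\psi_1$, and then conclude that $d\Phi_x$ — whose image is at least as large as that of $d\psi_{1,x}$ composed into the relevant affine chart — is injective. No genuinely new idea is needed beyond the Segre embedding heuristic; the care lies in phrasing separation of points and tangents purely in terms of the available global sections $\{\sigma_i \otimes \tau_j\}$ rather than an abstract basis of $H^0(M, L_1 \otimes L_2)$, and in noting that a linear projection $\mathbb{C}P^{L} \to \mathbb{C}P^{(N+1)(m+1)-1}$ that is defined (no base points) and whose composition with $\Phi$ is an embedding forces $\Phi$ itself to be an embedding.
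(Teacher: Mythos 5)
The paper states Lemma \ref{vamp1} without any proof, treating it as a classical fact, so there is no argument of the authors' to measure yours against; your proposal supplies the standard argument and it is correct. The key points are all in place: the products $\sigma_i\otimes\tau_j$ have no common zero and define a map that factors as the Segre embedding composed with $(\psi_1,\psi_2)$, which is an injective immersion because $\psi_1$ already is (injectivity of the Segre map on each factor coming from the rank-one matrix $(a_ib_j)$ determining $[a_i]$ and $[b_j]$ separately); and the passage from the subsystem spanned by the products to the complete linear system of $L_1\otimes L_2$ is handled by your observation that if a linear projection is defined along the image of $\Phi$ and the composite is an embedding, then $\Phi$ itself separates points and tangent vectors. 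The immersion bookkeeping you flag as the delicate step does go through exactly as you describe: in the affine chart where the $(i_0,j_0)$ coordinate is nonzero, the coordinates with $j=j_0$ reduce to $f_i/f_{i_0}$, whose differentials already span $T_x^*M$. No gap.
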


\begin{corollary}\label{vamp2}
Let $L\rightarrow M$ be an ample line bundle over a compact complex manifold. If there exits a $k_0\geq 0$, such that $L^k$ is very ample for $k=k_0,k_0\!+\!1,\cdots,2k_0\!-\!1$. Then $L^k$ is very ample for all $k\geq k_0$.
\end{corollary}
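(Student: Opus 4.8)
The plan is to prove the corollary by induction on $k$, using Lemma \ref{vamp1} as the sole substantive input. Before starting the induction I would record the elementary fact that every very ample line bundle is globally generated: it is the pullback of $\mathcal O(1)$ under a projective embedding $M\hookrightarrow \mathbb CP^{N'}$, and $\mathcal O(1)$ is globally generated by the homogeneous coordinate sections, so its pullback is too. This is what allows a power of $L$ to be fed into the $L_2$ slot of Lemma \ref{vamp1}.

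For the induction itself, I would take the cases $k=k_0,k_0\!+\!1,\cdots,2k_0\!-\!1$ as the base step, these being exactly the hypothesis (one may assume $k_0\geq 1$, since the statement is vacuous or false when $k_0=0$). For the inductive step, fix $k\geq 2k_0$ and assume $L^j$ is very ample whenever $k_0\leq j<k$. Since $k\geq 2k_0$, the exponent $k-k_0$ lies in the range $[k_0,k)$, so by the inductive hypothesis $L^{k-k_0}$ is very ample, hence globally generated; and $L^{k_0}$ is very ample by assumption. Then from the factorization
\begin{equation}
L^k = L^{k_0}\otimes L^{k-k_0},
\end{equation}
Lemma \ref{vamp1}, applied with $L_1=L^{k_0}$ and $L_2=L^{k-k_0}$, yields that $L^k$ is very ample, which closes the induction.

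I do not anticipate a real obstacle: all the weight sits in Lemma \ref{vamp1}, and the only points to watch are the observation that very ampleness implies global generation, and the bookkeeping that the split $k=k_0+(k-k_0)$ keeps both exponents inside the range $[k_0,k)$ for which very ampleness is already known. I would also note in passing that the hypothesis that $L$ is ample is not actually used in this argument: it is automatic, since $L^{k_0}$ being very ample already forces $L$ to be ample.
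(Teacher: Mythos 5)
Your proof is correct and is exactly the intended derivation: the paper states Corollary \ref{vamp2} without proof as a consequence of Lemma \ref{vamp1}, and your strong induction via the factorization $L^k=L^{k_0}\otimes L^{k-k_0}$, together with the observation that a very ample line bundle is globally generated, is the natural argument the authors have in mind. Your side remarks are also accurate: the statement must implicitly assume $k_0\geq 1$ (for $k_0=0$ the hypothesis is vacuous while the conclusion fails), and the ampleness hypothesis is redundant since very ampleness of $L^{k_0}$ already forces $L$ to be ample.
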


Another way to find $k_p$ and $V_p$ with similar properties is to extend the origin proof of Kodaira embedding theorem to a family setting. We omit the detailed argument here.

\medskip

By the above argument and the compactness of $Y$, we can easily find a uniform integer $k_1\geq k_0$, such that for all $k\geq k_1$ and $p\in X$, $L^k|_{f^{\!-\!1}(p)}$ is  very ample. Consequently the canonical map $\phi_k:X\rightarrow P(E_k^*)$ is a well-defined embedding. This concludes the proof.

\vspace{20pt}

\bibliographystyle{plain}

\begin{thebibliography}{99}

\bibitem{B09} B. Berndtsson, {\em Curvature of vector bundles associated to holomorphic fibrations}, Annals of Mathematics, {\bf 169} (2009), 531¨C560.

\bibitem{G69} P. Griffiths, J. Harris, {\em Principles of algebraic geometry}, John Wiley \& Sons, New York, 1978, ISBN: 0-471-32792-1.

\bibitem{L04I} R. Lazarsfeld, {\em Positivity in algebraic geometry I: Classical setting: line bundles and linear series}, Springer-Verlag, Berlin, 2004, ISBN: 3-540-22533-1.

\end{thebibliography}

\end{document}